 \let\mathscr\relax
\newcommand{\R}{\mathbb{R}}
\newcommand{\Z}{\mathbb{Z}}
\newcommand{\C}{\mathbb{C}}
\newcommand{\F}{\mathcal{F}}
\newcommand{\Todd}{\text{Todd}}
\newtheorem{thm}{Theorem}[section]
\newtheorem{cor}[thm]{Corollary}
\newtheorem{lem}[thm]{Lemma}
\newtheorem{prop}[thm]{Proposition}
\theoremstyle{definition}
\newtheorem{defin}[thm]{Definition}
\newtheorem{ex}[thm]{Example}
\newtheorem{que}[thm]{Question}
\newcommand{\ehr}{\text{ehr}}
\newcommand{\Ehr}{\text{Ehr}}
\setlist{nolistsep}
\theoremstyle{definition}
\title{Computing parametric weighted Ehrhart
polynomials of smooth polytopes}
\author{Daniel Hwang, Juliet Whidden, Josephine Yu}
\date{\today}
\begin{document}

\begin{abstract} 
We show that when integral polytopes are deformed while keeping the same facet normal vectors, the coefficients of weighted Ehrhart and $h^*$-polynomials are piecewise polynomial functions in the ``right hand sides'' of the linear inequalities defining the polytopes.  We give an algorithm and an implementation in SageMath for computing these polynomials for smooth polytopes, such as type $A$ alcoved polytopes, using a weighted Euler--Maclaurin type formula by Khovanskiǐ and Pukhlikov.  We  discuss some natural questions concerning signs of the coefficients of the weighted $h^*$-polynomials.

\smallskip
\noindent \textbf{Keywords.} Ehrhart Theory, lattice polytopes, alcoved polytopes, computations
\end{abstract}

\maketitle

\section{Introduction}
The central problem of Ehrhart theory is counting the integer points in integer dilates of an integral polytope~$P$. Ehrhart proved in 1967 that for any integral polytope $P$ of dimension $d$, the function $\ehr_{P}(t) := \#(tP \cap \mathbb{Z}^{d})$ is  a degree $d$ polynomial in $t$ \cite{ehrhart67}, which we call the {\bf Ehrhart polynomial}.  The \textbf{Ehrhart series }is a rational function of the form 
$$\Ehr_{P}(z) := \sum_{t \geq 0} \ehr_{P}(t) z^{t} = \frac{h_{d}^{*} z^{d} + \ldots + h_{0}^{*}}{(1 - z)^{d+1}},$$
where the numerator $h_{P}^{*}(z) = h_{d}^{*} z^{d} + \ldots + h_{0}^{*}$ is called the \textbf{Ehrhart $h^*$-polynomial.} Stanley proved in 1980 that the coefficients of $h^*_P$  are nonnegative integers \cite{STANLEY1980333}.  

Let $A$ be a fixed $n\times d$ integer matrix, and we will consider polytopes of the form 
\[
P_A(\mathbf{b}) := \{\mathbf{x} \in \R^d : A {\bf x} \leq \mathbf{b}\}
\]
for $\mathbf{b} = (b_1 \ldots b_n) \in \Z^n$.
 The integer point count $\#(P_A(\mathbf{b}) \cap \Z^d)$ is a piecewise quasipolynomial function in $b_1,\dots,b_n$, where the pieces correspond to combinatorial types of normal fans of polytopes of this form.  Sturmfels gave a precise description of these in~\cite{STURMFELS1995302}.  In the case when $A$ is unimodular, the polytope $P_A(\mathbf{b})$ is integral whenever $\mathbf{b}$ is integral, and the integer point counting function is a piecewise polynomial. De Loera and Sturmfels showed how to compute this efficiently~\cite{DeLoeraSturmfels}.

 Let $w$ be a homogeneous polynomial of degree $m$ in $d$ variables, which we use to count integer points in $\Z^d$ with weight $w$. Then the {\bf weighted Ehrhart function} $\ehr_{P, w}(t) = \sum_{\mathbf{p} \in tP \cap \mathbb{Z}^{d}} w(\mathbf{p})$ of an integral polytope $P$ is a polynomial of degree at most $d+m$, and the \textbf{weighted Ehrhart series} is a rational function of the form
$$\Ehr_{P, w}(z) = \sum_{t \geq 0} \ehr_{P, w}(t)z^{t} = \frac{h_{d + m}^{*}z^{d + m} + \ldots + h_{0}^{*}}{(1-z)^{d + m + 1}}.$$
The numerator $h_{P, w}^{*}(z) = h_{d + m}^{*}z^{d + m} + \ldots + h_{0}^{*}$ is called the \textbf{weighted $h^{*}$-polynomial} of $P$ with respect to weight $w$. The coefficients of  $h_{P, w}^{*}$ may be negative even when the weight is homogeneous and nonnegative on the polytope, but some sufficient conditions for positivity are known~\cite{Bajo_2024}.  
Our motivation comes from the following questions:
\begin{itemize} 
    \item How do the weighted Ehrhart and $h^*$-polynomials depend on $\mathbf{b}$?
    \item How can we efficiently compute the weighted Ehrhart and $h^*$-polynomials?
    \item Which combinations of integral polytopes $P$ and homogeneous polynomial weights $w$ give weighted $h^*$-polynomials with nonnegative coefficients?
\end{itemize}

We will answer the first two questions based on results of Pukhlikov and Khovanskiǐ \cite{Khovanskii-Pukhilkov-Virtual-Polytopes, Khovanskii-Pukhilkov} and present progress and open problems on the third. In Section \ref{sec:piecewise_polynomiality} we show  
that for integral polytopes $P_A(\mathbf{b})$, where $A$ is fixed and $\mathbf{b}\in \Z^n$ varies, the coefficients of $h_{P_A(\mathbf b), w}^{*}$ are piecewise polynomials in terms of $\mathbf b$, and the regions of polynomiality are {\em secondary cones} or {\em type cones} corresponding to different normal fans of polytopes. 
In Section \ref{sec:weighted_counts},
we give an effective algorithm for computing these polynomials for smooth polytopes, and we analyze its computational complexity.  Our code and a catalog of Ehrhart and $h^*$ polynomials for type $A$ alcoved polytopes of small dimension with respect to small degree weights are available on 
our website at\begin{center}\href{https://sites.gatech.edu/weightedehrhart/}{https://sites.gatech.edu/weightedehrhart/}.\end{center}
A similar catalog for alcoved polytopes in the unweighted case was given by Brandenburg, Elia, and Zhang~\cite{Brandenburg_2023}.
In Section \ref{sec:hstar_positivity} we investigate the signs of coefficients of $h_{P, w}^{*}$. We observe that the space of polytopes having nonnegative weighted $h^{*}$ is not convex and that all sign patterns of the coefficients of $h_{P, w}^{*}$ are possible in small cases. We also show that the roots of the weighted $h^*$ polynomial converge to the roots of the Eulerian polynomial as integral polytopes are dilated.

\section{Piecewise Polynomiality}
\label{sec:piecewise_polynomiality}

We first recall some key notions from~\cite{Khovanskii-Pukhilkov-Virtual-Polytopes}.
Let $\mathcal{P}$ be the set of all integral polytopes in $\R^d$.
A \textbf{finitely additive measure} on $\mathcal{P}$ is a mapping $\phi: \mathcal{P} \rightarrow \mathbb{R}$ such that for any 
collection of polytopes $P_{1}, \ldots, P_{k} \in \mathcal{P}$ such that $\bigcup_{i = 1}^{k} P_{i} \in \mathcal{P}$ and $\bigcap_{j=1}^\ell P_{i_j}\in \mathcal{P}$ for any $i_1<\dots<i_\ell$ , we have the following inclusion-exclusion property:
$$\phi\left(\bigcup_{i = 1}^{k} P_{i} \right) = \sum_{i = 1}^{k}\phi(P_{i}) - \sum_{i < j}\phi(P_{i} \cap P_{j}) + \cdots + (-1)^{k-1} \phi(P_1\cap\cdots\cap P_k).$$

A mapping $f : L \rightarrow R$ of abelian groups is called a {\bf polynomial of degree} $\leq m$ if either $m=0$ and $f$ is constant, or $m \geq 1$ and for each $\ell \in L$, the map $f_\ell : L \rightarrow R$ given by $f_\ell(\mathbf{y}) = f(\mathbf{y}+\ell) - f(\mathbf{y})$ is a polynomial of degree $\leq m-1$.
A finitely additive measure $\phi$ is \textbf{polynomial of degree $\leq m$} if for each $P \in \mathcal{P}$, the map $\Z^d \rightarrow \R$ given by $\ell \mapsto \phi(P+\ell)$ is a polynomial of degree $\leq m$.

\begin{lem}
For any polynomial weight $w$ of degree $m$, the weighted integer point counting function
is a finitely additive measure on $\mathcal{P}$, and it is polynomial of degree $\leq m$.  
\end{lem}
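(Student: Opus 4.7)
The plan is to prove the two claims separately: finite additivity follows from inclusion-exclusion on characteristic functions, and the polynomial-of-degree-$\leq m$ property follows from the translation-invariance of the integer lattice together with a short induction identifying the paper's recursive notion of ``polynomial'' with the familiar one.

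For finite additivity, I would start from the pointwise inclusion-exclusion identity
\[
\mathbf{1}_{\bigcup_{i=1}^k P_i}(\mathbf{p}) \;=\; \sum_{\emptyset \neq I \subseteq [k]} (-1)^{|I|+1} \mathbf{1}_{\bigcap_{i \in I} P_i}(\mathbf{p}),
\]
which holds for any point $\mathbf{p}$. Multiplying by $w(\mathbf{p})$ and summing over $\mathbf{p} \in \Z^d$ (the sums are finite because all polytopes involved are bounded) yields the inclusion-exclusion identity required in the definition of a finitely additive measure. Note nothing about $w$ is used here beyond the sum being well-defined.

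For the polynomial claim, the key observation is that because $\ell \in \Z^d$, the translation $\mathbf{p} \mapsto \mathbf{p} + \ell$ is a bijection on $\Z^d$, so
\[
\phi_w(P + \ell) \;=\; \sum_{\mathbf{p} \in (P+\ell) \cap \Z^d} w(\mathbf{p}) \;=\; \sum_{\mathbf{q} \in P \cap \Z^d} w(\mathbf{q} + \ell).
\]
For each fixed $\mathbf{q} \in P \cap \Z^d$, the function $\ell \mapsto w(\mathbf{q} + \ell)$ is a polynomial in the entries of $\ell$ of total degree $\leq m$ in the ordinary sense, since $w$ has degree $m$. A finite sum of such polynomials is again a polynomial of ordinary degree $\leq m$. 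So $f(\ell) := \phi_w(P + \ell)$ is an element of $\R[\ell_1, \dots, \ell_d]_{\leq m}$.

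It remains to check that an ordinary polynomial of total degree $\leq m$ is polynomial of degree $\leq m$ in the recursive sense of the paper. I would do this by induction on $m$. For $m = 0$ both notions are just constancy. For the inductive step, if $f$ has ordinary degree $\leq m$, then $f_\ell(\mathbf{y}) = f(\mathbf{y}+\ell) - f(\mathbf{y})$ is an ordinary polynomial of degree $\leq m-1$ because the top-degree terms cancel (a standard computation using the multinomial expansion of $(\mathbf{y}+\ell)^\alpha$), and by induction $f_\ell$ is then polynomial of degree $\leq m-1$ in the recursive sense. No step here is a real obstacle; the only point worth being careful about is confirming the equivalence of the two polynomial notions, which the inductive finite-difference argument handles cleanly.
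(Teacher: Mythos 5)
Your proof is correct and follows essentially the same route as the paper's: finite additivity is immediate from inclusion--exclusion of indicator functions, and polynomiality comes from the translation bijection on $\Z^d$ together with the cancellation of top-degree terms in the finite difference $w(\mathbf{q}+\mathbf{y}+\ell)-w(\mathbf{q}+\mathbf{y})$. The only difference is that you make explicit the induction identifying ordinary polynomials of degree $\leq m$ with the paper's recursive notion, a step the paper leaves implicit after computing the first finite difference.
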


\begin{proof}
Being a finitely additive measure follows immediately from the definition. 
To check polynomiality, fix $P\in \mathcal{P}$ and  define $f: \mathbb{Z}^{d} \rightarrow \mathbb{R}$  by $\ell \mapsto \sum_{\mathbf{p} \in (P + \ell) \cap \mathbb{Z}^{d}} w(\mathbf{p})$.  Then for each $\ell\in \Z^d$ and $\mathbf y = (y_1,\dots,y_d)$, we have
\begin{align*}
    f_{\ell}(\mathbf y) :\!&= f(\mathbf y + \ell) - f(\mathbf y) 
    = \sum_{\mathbf{p} \in P \cap \mathbb{Z}^{d}} \left( w(\mathbf{p} + \mathbf y + \ell) - w(\mathbf{p} + \mathbf y) \right)
\end{align*}
which is a polynomial of degree $\leq m - 1$ in $\mathbf y$ variables since $w$ is a polynomial of degree $m$, and the leading terms of $w(\mathbf{p + y} + \ell)$ and $w(\mathbf{p + y})$ with respect to $\bf y$ cancel out.
\end{proof}

A polytope $P'$ is obtained from a polytope $P$ by a \textbf{motion of the walls} if the normal fan of $P'$ is a coarsening of the normal fan of $P$. 
See Figure \ref{fig:alcoved_motion_of_the_walls}. For a polytope $P = P_A(\mathbf{b}^0)$ defined by $A x \leq \mathbf{b}^0$, the set of all $\mathbf{b} \in \Z^n$ such that $P_A(\mathbf{b})$ is obtained from $P$ by a motion of the walls is a polyhedral cone, often known as a {\bf type cone} of $P$.  It is a secondary cone of the vector configuration consisting of the rows of $A$.

\begin{figure}[!ht]
    \centering
    \includegraphics[width=0.5\linewidth]{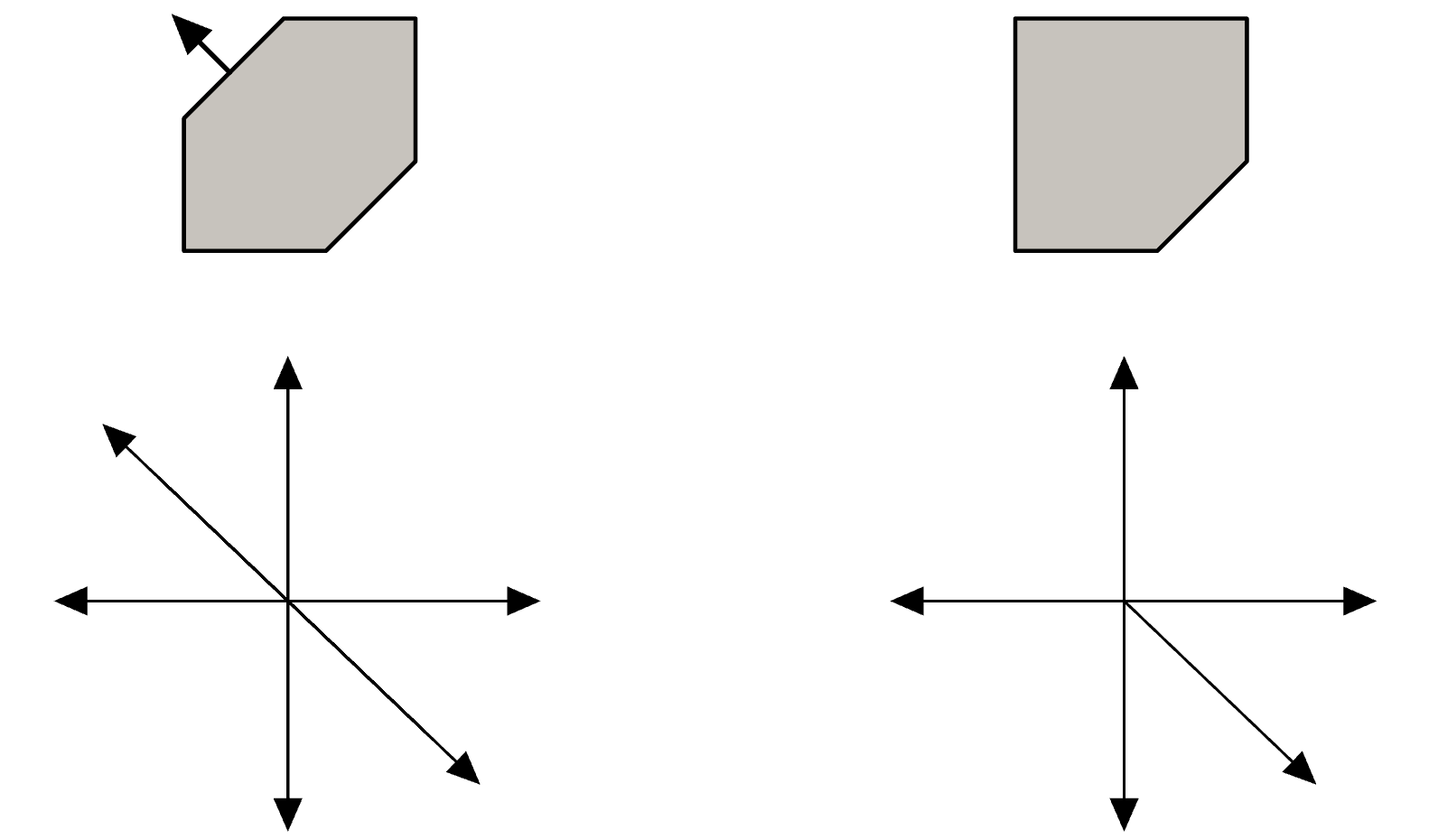}
    \caption{
    An example of $P'$ (top right) obtained by a motion of the walls of $P$ (top left).  Observe that $P'$'s normal fan (bottom right) is a coarsening of that of $P$ (bottom left).
    }
    \label{fig:alcoved_motion_of_the_walls}

\end{figure}


\begin{thm}[Theorem 1, \S 2, \cite{Khovanskii-Pukhilkov-Virtual-Polytopes}]\label{KP 1}
Let
$\phi: \mathcal{P} \rightarrow \mathbb{R}$ be any finitely additive measure which is polynomial of degree $\leq m$.
Fix a matrix $A$ and a polytope $P \in \mathcal{P}$. On the set of integer vectors $\mathbf{b} \in \Z^n$ such that $P_A(\mathbf{b}) \in \mathcal{P}$ is an integral polytope obtained as a motion of the walls from $P$, the map $\mathbf{b} \mapsto \phi(P_A(\mathbf{b}))$ is a real-valued polynomial  of degree $\leq d + m$ in $b_{1}, \ldots, b_{n}$.
\end{thm}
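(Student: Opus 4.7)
The plan is to use that on the type cone each vertex of $P_A(\mathbf{b})$ depends linearly on $\mathbf{b}$, reduce the problem to a single simplex via a compatible triangulation that varies linearly with $\mathbf{b}$, and then handle the simplex case by induction on $d$ driven by the polynomial-translation hypothesis on $\phi$.

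First I would observe that, within the type cone, each vertex $v$ of $P_A(\mathbf{b})$ corresponds to a fixed set $I_v$ of $d$ rows of $A$ that are tight at $v$, and satisfies $v(\mathbf{b}) = A_{I_v}^{-1}\mathbf{b}_{I_v}$, a linear function of $\mathbf{b}$. Choosing a triangulation $\mathcal{T}$ of $P$ whose $0$-skeleton lies in the vertex set of $P$ and tracking each vertex linearly in $\mathbf{b}$ produces a triangulation $\mathcal{T}(\mathbf{b})$ of $P_A(\mathbf{b})$ of constant combinatorial type on the type cone. Finite additivity of $\phi$ and inclusion-exclusion across the shared faces of $\mathcal{T}$ reduce the theorem to the claim that, for each simplex $\Delta(\mathbf{b}) = \mathrm{conv}(v_0(\mathbf{b}),\ldots,v_d(\mathbf{b}))$ with $v_i$ linear in $\mathbf{b}$, the value $\phi(\Delta(\mathbf{b}))$ is a polynomial of degree $\leq d + m$ in $\mathbf{b}$.

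For the simplex case I would induct on $d$. The base $d = 0$ is immediate: $\phi(\{v_0(\mathbf{b})\}) = \phi(\{0\} + v_0(\mathbf{b}))$ is polynomial of degree $\leq m$ in $v_0(\mathbf{b})$, hence in $\mathbf{b}$, by the polynomial-translation hypothesis applied to $P = \{0\}$. For the inductive step, fix a basis vector $\mathbf{e}_j$ of $\Z^n$ and consider the finite difference $g_j(\mathbf{b}) := \phi(\Delta(\mathbf{b}+\mathbf{e}_j)) - \phi(\Delta(\mathbf{b}))$. Because each vertex shifts by a fixed vector $u_{i,j} = A_{I_{v_i}}^{-1}(\mathbf{e}_j)_{I_{v_i}}$ independent of $\mathbf{b}$, the symmetric difference of $\Delta(\mathbf{b})$ and $\Delta(\mathbf{b}+\mathbf{e}_j)$ can, via finite additivity, be written as a signed sum of simplicial pieces of dimension $\leq d-1$ whose vertices are still linear in $\mathbf{b}$. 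By the inductive hypothesis each such term is polynomial of degree $\leq (d-1) + m$ in $\mathbf{b}$, so $g_j$ is polynomial of degree $\leq d + m - 1$. Since every first partial finite difference of $\mathbf{b}\mapsto\phi(\Delta(\mathbf{b}))$ is polynomial of degree $\leq d + m - 1$, the function itself is polynomial of degree $\leq d + m$ on the integer points of the type cone, by a standard discrete-integration characterization of polynomials on $\Z^n$.

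The main obstacle is the decomposition step in the inductive argument: the symmetric difference $\Delta(\mathbf{b}+\mathbf{e}_j) \bigtriangleup \Delta(\mathbf{b})$ is $d$-dimensional, not $(d-1)$-dimensional, and $\Delta(\mathbf{b})$ genuinely changes shape (not merely translates) as $\mathbf{b}$ varies. Producing lower-dimensional simplicial pieces cleanly requires a careful telescoping over the face lattice of $\Delta$, exploiting that each vertex shift is by a fixed vector, so that the telescoping collapses onto the shared $(d-1)$-faces. An alternative, structurally cleaner route avoiding this bookkeeping is via the polytope algebra of Khovanskii-Pukhlikov: on the type cone, $\mathbf{b}\mapsto [P_A(\mathbf{b})]$ is polynomial of degree $\leq d$ in the group of virtual polytopes, and $\phi$ extends to a polynomial of degree $\leq m$ on this algebra, so the composition is polynomial of total degree $\leq d + m$.
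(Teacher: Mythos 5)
This theorem is not proved in the paper: it is imported verbatim as Theorem~1 of \S 2 of Khovanskiǐ--Pukhlikov \cite{Khovanskii-Pukhilkov-Virtual-Polytopes}, so there is no in-paper proof to compare yours against. Judged on its own terms, your outer scaffolding is reasonable (vertices linear in $\mathbf{b}$ on the type cone, a triangulation of constant combinatorial type, inclusion--exclusion to reduce to simplices, and the finite-difference definition of polynomiality), but the inductive step contains a genuine gap --- one you flag yourself, and which is in fact fatal in the form stated rather than a bookkeeping issue.

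The problematic claim is that $\phi(\Delta(\mathbf{b}+\mathbf{e}_j))-\phi(\Delta(\mathbf{b}))$ can, ``via finite additivity, be written as a signed sum of simplicial pieces of dimension $\leq d-1$.'' Take $\phi$ to be Lebesgue measure on $\R^d$: this is a finitely additive measure, polynomial of degree $\leq 0$ (it is translation invariant), and it vanishes on every polytope of dimension $\leq d-1$. If your decomposition existed, $\phi(\Delta(\mathbf{b}))$ would be constant in $\mathbf{b}$ on the type cone; but the volume of $\Delta(\mathbf{b})$ is a nonconstant polynomial of degree $d$ in $\mathbf{b}$. So no amount of ``careful telescoping over the face lattice'' can produce such a decomposition: the difference of the two simplices is genuinely full-dimensional, and its $\phi$-value cannot be recovered from lower-dimensional pieces alone. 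A correct inductive step must bring the translation-polynomiality of $\phi$ to bear on \emph{full-dimensional} pieces of the symmetric difference (e.g., controlling the $\phi$-value of the slab between $P_A(\mathbf{b})$ and $P_A(\mathbf{b}+\mathbf{e}_j)$ by combining induction with the degree-$\leq m$ behavior under integer translation), which is essentially what Khovanskiǐ and Pukhlikov do and what your sketch omits. Your proposed ``alternative route'' through the algebra of virtual polytopes does not repair this, since the assertions that $\mathbf{b}\mapsto[P_A(\mathbf{b})]$ is polynomial of degree $\leq d$ and that $\phi$ extends polynomially are precisely the content of the Khovanskiǐ--Pukhlikov machinery you would be trying to prove; invoking them is circular here. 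Two smaller points: the base case and the reduction to simplices are fine, and the finite-difference criterion you appeal to at the end does need a short extra argument because the domain is the set of lattice points of a cone rather than all of $\Z^n$, but that part is standard.
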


By applying this theorem to an arbitrary integer polytope and our weighted integer point counting function, we immediately obtain the following.
\begin{prop}
Fix an $n\times d$ integer matrix $A$, a polytope $P \subseteq \mathbb{R}^{d}$, and a polynomial weight $w$ of degree $m$.  On the set of integer vectors $\mathbf{b} \in \Z^n$ such that $P_A(\mathbf{b}) \in \mathcal{P}$ is an integral polytope obtained as a motion of the walls from $P$, the weighted integer point counting function $\mathbf{b} \mapsto \sum_{\mathbf{p} \in P_A(\mathbf{b}) \cap \mathbb{Z}^{d}} w(\mathbf{p})$ is a polynomial of degree $\leq d + m$ in $b_1, \dots, b_n$.
\end{prop}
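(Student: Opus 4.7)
The plan is essentially to invoke Theorem \ref{KP 1} with $\phi$ set to the weighted integer point counting function, since the ingredients have already been assembled in the excerpt. So the proof will have almost no content of its own; its job is just to verify that the hypotheses of Theorem \ref{KP 1} are satisfied.

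First I would fix $w$ of degree $m$ and define $\phi : \mathcal{P} \to \R$ by $\phi(Q) = \sum_{\mathbf{p} \in Q \cap \Z^d} w(\mathbf{p})$. By the Lemma stated just before the definition of motion of the walls, $\phi$ is a finitely additive measure on $\mathcal{P}$ that is polynomial of degree $\leq m$ in the sense defined earlier, so the hypotheses of Theorem \ref{KP 1} hold verbatim. Applying Theorem \ref{KP 1} with this $\phi$, with the given matrix $A$, and with the given base polytope $P$, yields that the map $\mathbf{b} \mapsto \phi(P_A(\mathbf{b}))$ is a polynomial of degree $\leq d+m$ in $b_1,\dots,b_n$ on the set of integer $\mathbf{b}$ for which $P_A(\mathbf{b})$ is an integral polytope obtained from $P$ by a motion of the walls. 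Unpacking $\phi(P_A(\mathbf{b}))$ gives precisely the weighted integer point counting function in the statement, finishing the proof.

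Since this is a direct corollary, there is no real obstacle; the only thing worth double-checking is that the exact hypotheses of Theorem \ref{KP 1} match what the Lemma provides, namely the degree bound $\leq m$ and finite additivity in the inclusion-exclusion form stated. Both are immediate from the Lemma, so no additional argument is needed beyond citing it.
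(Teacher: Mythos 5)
Your proposal is correct and matches the paper exactly: the paper derives this proposition as an immediate consequence of Theorem \ref{KP 1} applied to the weighted integer point counting function, whose hypotheses (finite additivity and polynomiality of degree $\leq m$) are supplied by the preceding Lemma, precisely as you describe. Nothing further is needed.
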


In other words, the weighted integer point counting function is a piecewise polynomial, where the pieces are type cones. 
Furthermore, when $A$ is unimodular, i.e.\ when it has rank $d$ and all $d\times d$ minors have determinant $0$ or $\pm1$, all polytopes $P_A(\mathbf b)$ are integral whenever $\mathbf b$ is an integer vector.   Thus we obtain the following.
\begin{cor}
\label{cor:weighted_ehrhart_polynomial_from_weighted_count}
Fix an $n \times d$ unimodular matrix $A$.  For $\mathbf{b} \in \Z^n$, the coefficients of the weighted Ehrhart and $h^*$-polynomials of the polytopes $P_A(\mathbf{b})$ are piecewise polynomials in $b_{i}$. 
\end{cor}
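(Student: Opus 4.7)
The plan is to reduce the corollary to the preceding Proposition by exploiting the scaling identity $t\,P_A(\mathbf{b}) = P_A(t\mathbf{b})$ together with the fact that type cones are invariant under positive dilation. Since $A$ is unimodular, $P_A(\mathbf{b})$ is automatically an integral polytope for every $\mathbf{b} \in \Z^n$, so the weighted Ehrhart and $h^*$-polynomials are well-defined throughout the family.

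First I would verify the scaling identity: if $Ax \leq \mathbf{b}$ then $A(tx) \leq t\mathbf{b}$, and conversely, so $t\,P_A(\mathbf{b}) = P_A(t\mathbf{b})$ for every nonnegative integer $t$. In particular, each dilate of a polytope in the family is itself in the family. Next, if $\mathbf{b}$ lies in a type cone $C$ (so $P_A(\mathbf{b})$ is obtained from some reference polytope by a motion of the walls), then $t\mathbf{b} \in C$ as well, since type cones are polyhedral cones closed under nonnegative scaling. Hence the hypothesis of the preceding Proposition holds uniformly in $t$ throughout $C$.

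Consequently, the Proposition applied within $C$ yields a single polynomial $F_C$ of total degree at most $d+m$ in $n$ variables such that
\[
\ehr_{P_A(\mathbf{b}), w}(t) \;=\; \sum_{\mathbf{p} \in P_A(t\mathbf{b}) \cap \Z^d} w(\mathbf{p}) \;=\; F_C(tb_1, \dots, tb_n)
\]
for every $\mathbf{b} \in C \cap \Z^n$ and every nonnegative integer $t$. Regrouping monomials of $F_C$ by their total degree in the scalar $t$ gives an expansion $F_C(tb_1,\dots,tb_n) = \sum_{k=0}^{d+m} c_{k,C}(\mathbf{b})\, t^k$, in which each coefficient $c_{k,C}(\mathbf{b})$ is a polynomial in $b_1,\dots,b_n$. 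This already establishes piecewise polynomiality of the weighted Ehrhart coefficients, with pieces the type cones.

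For the weighted $h^*$-polynomial, I would pass through the standard invertible linear change of basis on $\R[t]_{\leq d+m}$ that converts the coefficients $(c_0,\dots,c_{d+m})$ in the monomial basis $\{t^k\}$ into the coefficients $(h_0^*,\dots,h_{d+m}^*)$ appearing in the Ehrhart series. This change of basis depends only on $d+m$ and not on $\mathbf{b}$, so applying it to the piecewise polynomial Ehrhart coefficients yields piecewise polynomial $h^*$-coefficients with respect to the same type cone decomposition. The only substantive point to check is that one and the same type cone decomposition works simultaneously for all $t$, which is handled by the scale invariance of type cones; everything after that is bookkeeping, so I do not expect any serious obstacle.
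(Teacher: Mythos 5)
Your proposal is correct and follows essentially the same route as the paper: substitute $b_i \mapsto t b_i$ into the piecewise weighted-count polynomial (valid on each type cone since cones are scale-invariant), read off the $t$-coefficients as polynomials in $\mathbf{b}$, and then pass to the $h^*$-coefficients by the fixed linear transformation (via Eulerian polynomials) depending only on $d+m$. You are in fact slightly more explicit than the paper about why a single type cone serves all dilates simultaneously, which is a welcome clarification rather than a deviation.
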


\section{Algorithms and computations for smooth polytopes}
\label{sec:weighted_counts}

A $d$-dimensional integral polytope $P \subset \R^d$ is called \textbf{smooth} if each vertex cone of $P$ is generated by a basis of $\Z^d$.  For a unimodular matrix $A$, a polytope $P_A(\mathbf b)$ is smooth if and only if it is integral and simple.
We will now recall an {\it Euler--Maclaurin} type formula, due to Khovanskiǐ and Pukhlikov, that relates a weighted sum of lattice points in a smooth polytope to certain differential operators applied to integrals.  Generalizations exist for non-smooth polytopes, but they are significantly more complicated~\cite{BrionVergne, BerlineVergne}.




\begin{defin}
The \textit{Todd operator} is a differential operator defined as $$\Todd_h=\sum_{k\geq 0}(-1)^k \frac{B_k}{k!}\left(\frac{d}{dh}\right)^k,$$  where $B_k$ are the Bernoulli numbers satisfying $\displaystyle\frac{z}{e^z-1}=\sum_{k\geq 0} \frac{B_k}{k!}z^k.$
\end{defin}

With respect to multiple variables $\textbf{h} = (h_1, \ldots, h_{n})$, the \textbf{multivariate Todd operator} is
\(\Todd_{\textbf{h}} = \prod_{i = 1}^{n} \Todd_{h_{i}}.\)
It is linear and preserves polynomiality.

The following is a weighted version of an Euler--Maclaurin type formula due to Khovanskiǐ and Pukhlikov~\cite{Khovanskii-Pukhilkov}, stated as in \cite[Theorem 12.6]{beckrobins}, which allows us to compute the weighted integer point count by applying the multivariate Todd operator to the integral of the weight over the polytope after a perturbation.
For variables ${\mathbf x} = (x_{1}, \ldots, x_{d})$, we denote $e^{z_{1} x_{1} + \ldots + z_{d} x_{d}}$ as $\text{exp}(\mathbf{x \cdot z})$.

\begin{thm}[Weighted Khovanskiǐ–Pukhlikov]
\label{thm:weighted-khovanskii-pukhilkov}
For any smooth $d$-polytope of the form $P = P_A(\mathbf{b})$ and homogeneous polynomial weight $w,$
$$\sum_{\mathbf{p}\in P \cap \Z^d} w(\mathbf{p})=\Bigg(\operatorname{Todd}_\mathbf{h} \int_{P_A(\mathbf{b+h})} w(\mathbf{x}) d\mathbf{x}\Bigg)\Bigg|_{\mathbf{h}=0}.$$
\end{thm}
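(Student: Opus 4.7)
The plan is to adapt the classical proof of the unweighted Khovanskiǐ--Pukhlikov formula (as in \cite[Theorem 12.6]{beckrobins}) to polynomial weights via an exponential generating function. Because both sides of the claimed identity are $\R$-linear in $w$, and monomials $\mathbf x^{\boldsymbol\alpha}$ span the polynomial weights, it suffices to prove the formal identity
\begin{equation*}
\sum_{\mathbf p\in P_A(\mathbf b)\cap\Z^d} e^{\mathbf z\cdot\mathbf p}
\;=\;
\operatorname{Todd}_{\mathbf h}\!\left[\int_{P_A(\mathbf b+\mathbf h)} e^{\mathbf z\cdot\mathbf x}\, d\mathbf x\right]\bigg|_{\mathbf h=0}
\end{equation*}
in an auxiliary variable $\mathbf z$, and then extract the coefficient of $\mathbf z^{\boldsymbol\alpha}/\boldsymbol\alpha!$ on each side to recover the case $w(\mathbf x)=\mathbf x^{\boldsymbol\alpha}$.

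I would establish this exponential identity via Brion's vertex localization: for the smooth polytope $Q=P_A(\mathbf b+\mathbf h)$, at each vertex $\mathbf v=\mathbf v(\mathbf h)$ with primitive inward edge vectors $\mathbf u_1^{\mathbf v},\dots,\mathbf u_d^{\mathbf v}$ forming a $\Z^d$-basis, one has
\begin{equation*}
\int_Q e^{\mathbf z\cdot\mathbf x}\, d\mathbf x = \sum_{\mathbf v} \frac{e^{\mathbf z\cdot\mathbf v(\mathbf h)}}{\prod_{j=1}^d (-\langle \mathbf z,\mathbf u_j^{\mathbf v}\rangle)},
\qquad
\sum_{\mathbf p\in Q\cap\Z^d} e^{\mathbf z\cdot\mathbf p} = \sum_{\mathbf v}\frac{e^{\mathbf z\cdot\mathbf v(\mathbf h)}}{\prod_{j=1}^d (1-e^{\langle \mathbf z,\mathbf u_j^{\mathbf v}\rangle})}
\end{equation*}
as meromorphic functions near $\mathbf z=0$. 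The edge directions are invariant under parallel motion of the facets, so only the positions $\mathbf v(\mathbf h)$ depend on $\mathbf h$, and each depends linearly on only the $d$ coordinates of $\mathbf h$ indexing the facets through $\mathbf v$.

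The computational heart of the argument is the one-variable identity
\begin{equation*}
\operatorname{Todd}_h\!\left[\frac{e^{(b+h)c}}{c}\right]\bigg|_{h=0} \;=\; \frac{e^{bc}}{1-e^{-c}},
\end{equation*}
which is immediate from the Bernoulli generating function $z/(e^z-1)=\sum_{k\geq 0}(B_k/k!)z^k$ defining $\operatorname{Todd}_h$. Applying this identity once per facet incident to $\mathbf v$ (after expanding $\mathbf z\cdot\mathbf v(\mathbf h)$ as an affine function of those $h_i$ with signs from the duality between edge vectors and facet normals), and using that $\operatorname{Todd}_{h_i}$ acts trivially on terms independent of $h_i$, transforms each integral vertex contribution into the corresponding sum vertex contribution; summing over vertices yields the exponential identity.

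The main technical obstacle is to reconcile two different senses of equality: Brion's decompositions hold as meromorphic identities whose summands carry spurious poles on $\langle \mathbf z,\mathbf u_j^{\mathbf v}\rangle=0$ that cancel only after summation, while $\operatorname{Todd}_{\mathbf h}$ naturally operates on formal series in $\mathbf h$. I would handle this by fixing a multi-index $\boldsymbol\alpha$ and observing that, by Theorem~\ref{KP 1} applied to the weighted Lebesgue measure $P\mapsto\int_P\mathbf x^{\boldsymbol\alpha}\,d\mathbf x$, the $\mathbf z^{\boldsymbol\alpha}$-coefficient of the integral is polynomial in $\mathbf h$ on the type cone of $P$; hence the Todd operator truncates to a finite sum of derivatives, and the singularity cancellations on the right-hand side match those on the left, delivering a genuine polynomial identity in $\mathbf b$.
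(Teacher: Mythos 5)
Your proof is correct and follows essentially the same route as the paper: reduce to monomial weights by linearity, invoke the exponential Khovanskiǐ--Pukhlikov identity, and extract the coefficient of $\mathbf z^{\boldsymbol\alpha}$ (the paper's application of $\partial_w$ followed by setting $\mathbf z=0$ is the same operation). The only difference is that the paper simply cites \cite[Theorem 12.6]{beckrobins} for the exponential identity, whereas you re-derive it via Brion's vertex localization and the one-variable Todd computation; that extra work, and your care about the meromorphic-versus-formal-series issue, is sound but not needed for the argument.
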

\begin{proof} It suffices to prove this when $w$ is a monomial.
We will use the formulation in \cite[Theorem 12.6]{beckrobins} of the Khovanskiǐ–Pukhlikov formula:
\begin{equation}\label{KP}
    \sum_{\mathbf{p}\in P} \exp(\mathbf{z}\cdot \mathbf{p})=\Bigg(\operatorname{Todd}_\mathbf{h} \int_{P_A(\mathbf{b}+\mathbf{h})} \exp(\mathbf{x}\cdot \mathbf{z}) d\mathbf{x} \Bigg)\Bigg|_{\mathbf{h}=0}
\end{equation} for any smooth polytope $P.$
For $w=x_1^{a_1}\cdots x_d^{a_d}$, define $\partial_{w}=\frac{\partial^{a_1}}{\partial z_1^{a_1}}\circ \cdots \circ \frac{\partial^{a_d}}{\partial z_d^{a_d}}$.    Then 
\begin{align*}
\label{eqn:weighted_count_as_partial}
( \partial_{w} \sum_{\mathbf{p}\in P} \exp(\mathbf{z} \cdot \mathbf{p})  )\Big|_{\mathbf{z}=0} 
=  \sum_{\mathbf{p}\in P}  w(\mathbf{p}).
\end{align*} 
The desired result is obtained by applying $\partial_w$ to the right-hand side of Equation (\ref{KP}) and setting $\mathbf z = 0$. The three operations $\partial_w,$ $\Todd_h$, and $\int dx$ commute, since they are with respect to different variables.  
\end{proof}


This allows us to explicitly compute the formulas whose existence was proven in Corollary \ref{cor:weighted_ehrhart_polynomial_from_weighted_count}.  To do so, we discuss how to compute the integral $\int_{P_A(\mathbf{b+h})} w(\mathbf{x}) d\mathbf{x}$ as a polynomial in $\mathbf{b+h}$ in a type cone.

Fix a polytope of the form $P=P_A(\mathbf b^0)$ and a triangulation $T$ of $P$.  As $P$ deforms to $P_A(\mathbf b + \mathbf h)$ by a motion of the walls, its vertices move as polynomials in $\mathbf{b} + \mathbf{h},$ and $T$ gets deformed accordingly as well. The integral $\int_{P_A(\mathbf{b} + \mathbf{h})} w(x) dx=\sum_{\Delta\in T(\mathbf{b} + \mathbf{h})} \int_\Delta w(x) dx$ is then computable as a polynomial in $\mathbf{b}$, using the following.

\begin{thm}[Corollary 20, \cite{how-to-integrate-a-polynomial-over-a-simplex}]
\label{thm:how-to-integrate-a-polynomial-over-a-simplex}
Let $f$ be a homogeneous polynomial of degree $m$ in $d$ variables, and let $\mathbf{s}_{1}, \ldots, \mathbf{s}_{d+1}$ be the vertices of a $d$-dimensional simplex $\Delta$. Then
$$\int_{\Delta} f(\mathbf{y}) d\mathbf{y} = \frac{\operatorname{vol}(\Delta)}{2^m m! \binom{m+d}{m}} \sum_{1 \leq i_{1} \leq i_{2} \leq \ldots \leq i_{m}\leq d+1} \, \sum_{\epsilon \in \{\pm 1\}^{m}} \epsilon_{1}\epsilon_{2} \ldots \epsilon_{m}f(\sum_{k = 1}^{m} \epsilon_{k} \mathbf{s}_{i_{k}}).$$
\end{thm}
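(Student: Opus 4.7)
The plan is to reduce the identity to the special case $f = L^m$ for a single linear form $L$, where both sides can be computed in closed form by barycentric integration, and then to invoke the polarization identity to translate the resulting expression into the right-hand side of the claimed formula.

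First, I would observe that both sides of the claimed identity are linear in $f$ when regarded as functionals on the space $V_m$ of homogeneous polynomials of degree $m$ in $d$ variables. A classical polarization argument shows that $V_m$ is spanned by $m$-th powers of linear forms $\{L^m : L \text{ linear}\}$. Hence it suffices to check the identity for $f = L^m$.

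Next, for the left-hand side with $f = L^m$, I would parametrize $\Delta$ by barycentric coordinates: every $\mathbf{y} \in \Delta$ writes uniquely as $\mathbf{y} = \sum_{i=1}^{d+1} t_i \mathbf{s}_i$ with $t_i \geq 0$ and $\sum_i t_i = 1$, so $L(\mathbf{y}) = \sum_i t_i L(\mathbf{s}_i)$. Expanding by the multinomial theorem, combining with the change-of-variables Jacobian $d! \cdot \operatorname{vol}(\Delta)$, and applying the classical Dirichlet integral
$$\int_{\Delta^d} t_1^{\alpha_1} \cdots t_{d+1}^{\alpha_{d+1}} \, dt_1 \cdots dt_d = \frac{\alpha_1! \cdots \alpha_{d+1}!}{(m+d)!}$$
with $m = |\alpha|$, the factorials in the multinomial coefficient cancel those in the Dirichlet formula, leaving
$$\int_\Delta L(\mathbf{y})^m \, d\mathbf{y} = \frac{\operatorname{vol}(\Delta)}{\binom{m+d}{m}} \, h_m\!\bigl(L(\mathbf{s}_1), \ldots, L(\mathbf{s}_{d+1})\bigr),$$
where $h_m$ is the complete homogeneous symmetric polynomial, which by definition equals $\sum_{1 \leq i_1 \leq \cdots \leq i_m \leq d+1} \prod_{k} L(\mathbf{s}_{i_k})$.

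Finally, I would apply the polarization identity
$$L(x_1) \cdots L(x_m) = \frac{1}{2^m m!} \sum_{\epsilon \in \{\pm 1\}^m} \epsilon_1 \cdots \epsilon_m \, L\!\left(\sum_{k=1}^m \epsilon_k x_k\right)^{\! m}$$
with $x_k = \mathbf{s}_{i_k}$ and sum over multi-indices $i_1 \leq \cdots \leq i_m$. Substituting back into the formula above and absorbing the factor $\tfrac{1}{2^m m!}$ yields precisely the right-hand side of the theorem with $f = L^m$, completing the reduction.

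The main technical obstacle is the polarization step, which appears twice: once to reduce to powers of linear forms, and once to convert the complete homogeneous symmetric evaluation into the signed alternating sum over $\{\pm 1\}^m$. The rest is careful bookkeeping: matching the multinomial coefficients to the Dirichlet factorials and tracking the constant $\tfrac{1}{2^m m! \binom{m+d}{m}}$. Once the polarization identities are established and the span fact for $V_m$ is noted, the proof becomes a routine computation.
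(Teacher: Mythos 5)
Your argument is correct: the reduction by linearity to $f=L^m$, the Dirichlet-integral computation giving $\int_\Delta L^m = \frac{\operatorname{vol}(\Delta)}{\binom{m+d}{m}}h_m(L(\mathbf{s}_1),\ldots,L(\mathbf{s}_{d+1}))$, and the polarization identity converting $h_m$ into the signed sum over $\{\pm 1\}^m$ all check out, including the constant $\frac{1}{2^m m!\binom{m+d}{m}}$. The paper itself gives no proof --- it quotes this as Corollary 20 of the cited reference --- and your derivation is essentially the one used there (integration of powers of linear forms over a simplex combined with polarization), so nothing further is needed.
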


Putting them together, we obtain Algorithm~\ref{alg:cap} for computing the parametric weighted integer point count for a family of smooth polytopes in a type cone. 
The weighted Ehrhart polynomial of $P_A(\mathbf{b})$ is obtained by evaluating the weighted count at $b_{i} = tb_{i}$.  The algorithm has been implemented in SageMath notebook, availabe on our \href{https://sites.gatech.edu/weightedehrhart/}{website}.

\begin{algorithm}
\begin{algorithmic}
\caption{Parametric Weighted Integer Point Count}\label{alg:cap}
\Require A homogeneous polynomial weight  $w$ in $d$ variables, a $n \times d$ integer matrix $A$, and a vector $\mathbf{b}^0 \in \Z^n$ such that  $P_A(\mathbf{b}^0)$ is a smooth polytope.
\Ensure The polynomial in $\mathbf{b}$ giving the weighted integer point count $\sum_{\mathbf{p} \in P_A(\mathbf{b}) \cap \mathbb{Z}^{d}} w(\mathbf{p})$ for all integral polytopes $P_A(\mathbf{b})$ obtained from $P_A(\mathbf{b}^0)$ by a motion of the walls.  

\State {\bf Step 1:} Find a triangulation of $P_A(\mathbf{b}^0)$, which gives a triangulation of $P_A(\mathbf{b+h})$ by a motion of the walls from $P_A(\mathbf b^0)$.  Write its vertices as polynomials in $\mathbf{b}$ and $\mathbf{h}$.
\State {\bf Step 2:} For each maximal simplex $\Delta$ in the triangulation of $P_A(\mathbf{b+h})$, compute the integral $\int_{\Delta} w(\mathbf{x}) d\mathbf{x}$ using Theorem~\ref{thm:how-to-integrate-a-polynomial-over-a-simplex}.  Sum up integrals over all maximal simplices to obtain $\int_{P_A(\mathbf{b}+\mathbf{h})} w(\mathbf{x}) d\mathbf{x}$ as   a polynomial in $\mathbf{b}$ and $\mathbf{h}$.
\State {\bf Step 3:} Apply the Todd operator to this integral and evaluate at $\mathbf{h} = 0$, which gives the parametric weighted count as a polynomial in $\mathbf b$, by Theorem \ref{thm:weighted-khovanskii-pukhilkov}.
\end{algorithmic}
\end{algorithm}

\subsection{Example: alcoved polytopes}
\label{sec:example}


In this section, we specialize our algorithm to (type $A$) alcoved polytopes which are also known as {\em polytropes}.
A $d$-dimensional \textbf{alcoved polytope} is an integral polytope in $\R^d$ defined by inequalities of the form \begin{equation}
\label{eqn:alcovedineqs}
    x_i-x_j\leq b_{ij}
\end{equation} where $b_{ij} \in \Z$ for all pairs of distinct $i,j\in[d+1]$.
We set $x_{d+1}=0$ by convention. 
Every alcoved polytope is associated with a unique set of parameters $\mathbf b = (b_{ij})$ such that $b_{ij}+b_{jk}\geq b_{ik}$ for all distinct $i,j,k\in [d+1].$ Equivalently, these conditions mean that all the inequalities \eqref{eqn:alcovedineqs} are tight. The set of all tuples satisfying these inequalities forms a cone (called the {\em metric cone} or the {\em alcoved cone}) in $\R^{d(d+1)}$ and has a fan structure $\F_d$ given by the combinatorial types of alcoved polytopes  \cite{tran2016enumeratingpolytropes,early2025}. 
Alcoved polytopes in the interior of a maximal cone of $\F_d$ are called \textbf{maximal}. They are smooth with $\binom{2d}{d}$ vertices. We provide a function in our code that takes in a non-maximal alcoved polytope $P_A(\mathbf{b})$ and returns a maximal alcoved polytope $P_{A}(\mathbf{b}')$ whose normal fan refines that of $P_{A}(\mathbf{b})$.

\begin{figure}[h]
    \centering
    \includegraphics[width=0.5\linewidth]{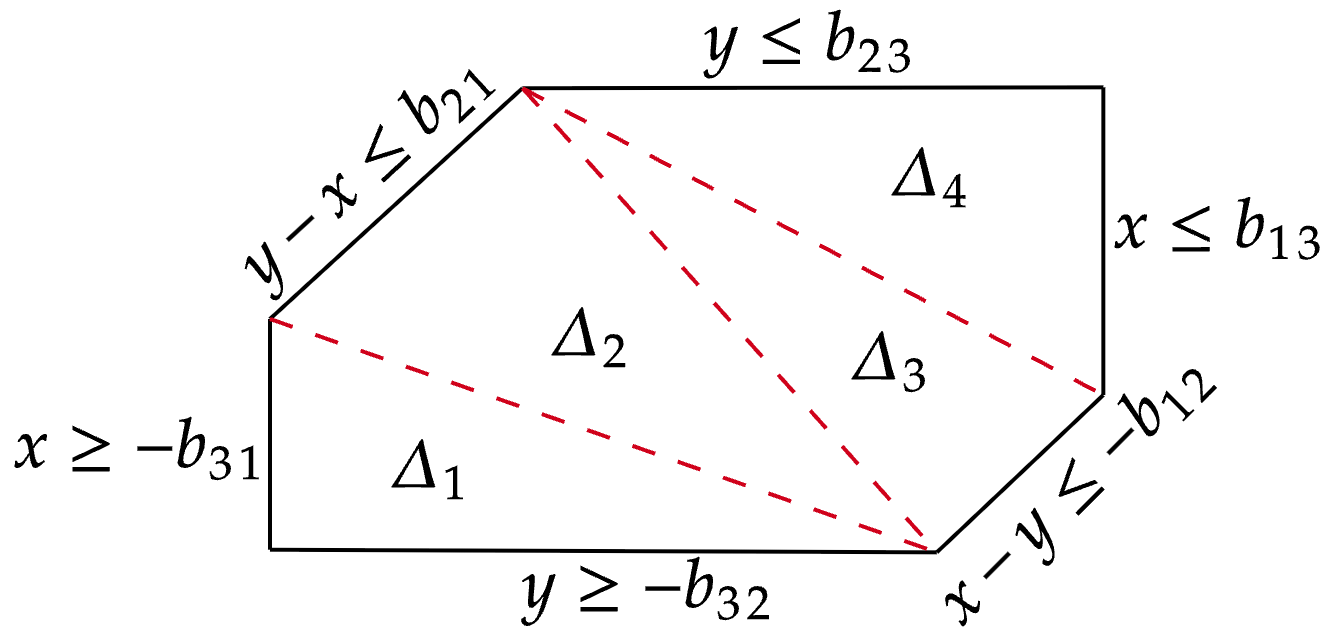}
    \caption{A fixed triangulation of a 2-dimensional alcoved polytope}
    \label{2D triangulation}
\end{figure}

We now demonstrate our algorithm by computing this parametric formula for 2-dimensional alcoved polytopes $P_A(\mathbf{b})$ with weight $w(\mathbf{p})=p_1p_2$.  We compute the integral $\int_{P_A(\mathbf{b} + \mathbf{h})}w(\mathbf{x})d\mathbf{x}$ by fixing a triangulation of the polytope (Figure \ref{2D triangulation}), integrating over each simplex
using Theorem \ref{thm:how-to-integrate-a-polynomial-over-a-simplex} with $d=m=2$, and  
summing over each simplex, to obtain the integral as a polynomial formula in $\mathbf b$ and $\mathbf h$.  Then we iteratively apply the Todd operator 6 times, once for each $h_{ij},$ and set each $h_{ij}=0,$  to obtain
{\footnotesize\begin{align*} 
\sum_{\mathbf{p}\in P_A(\mathbf{b})}w(\mathbf{p}) &=  
\frac{1}{24}
\big( b_{12}^4 - 6b_{12}^2b_{13}^2 + 8b_{12}b_{13}^3 - 3b_{13}^4 + b_{21}^4 + 6b_{13}^2b_{23}^2 - 6b_{21}^2b_{23}^2 + 8b_{21}b_{23}^3 - 3b_{23}^4 - 6b_{21}^2b_{31}^2 \\
& + 8b_{21}b_{31}^3 - 3b_{31}^4 - 6b_{12}^2b_{32}^2 + 6b_{31}^2b_{32}^2 + 8b_{12}b_{32}^3 - 3b_{32}^4 + 2b_{12}^3 - 6b_{12}^2b_{13} + 6b_{12}b_{13}^2 - 2b_{13}^{3} + 2b_{21}^{3}\\ & 
+ 6b_{13}^2b_{23} - 6b_{21}^2b_{23} + 6b_{13}b_{23}^2 + 6b_{21}b_{23}^2 - 2b_{23}^3 - 6b_{21}^2b_{31} + 6b_{21}b_{31}^2 - 2b_{31}^{3} - 6b_{12}^{2}b_{32} + 6b_{31}^{2}b_{32} \\
& + 6b_{12}b_{32}^2 + 6b_{31}b_{32}^2 - 2b_{32}^3 - b_{12}^2 - 2b_{12}b_{13} + 3b_{13}^2 - b_{21}^2 + 6b_{13}b_{23} - 2b_{21}b_{23} + 3b_{23}^2 - 2b_{21}b_{31} \\
&  + 3b_{31}^2 - 2b_{12}b_{32} + 6b_{31}b_{32} + 3b_{32}^2 - 2b_{12}  + 2b_{13}  - 2b_{21} + 2b_{23} + 2b_{31} + 2b_{32}\big)
\end{align*}}

By replacing each $b_{ij}$ with $tb_{ij},$ we obtain a formula for the parametric weighted Ehrhart polynomial $\ehr_{P_A(\mathbf{b}), w}(t),$ which can then be used to find the  parametric weighted $h^*$-polynomial $h^*_{P_A(\mathbf{b}), w}(z).$  

More details of this example, as well as a catalog of all parametric weighted Ehrhart and $h^*$-polynomials for 2-dimensional and 3-dimensional alcoved polytopes with all monomial weights up to degree 5 and degree 3, respectively, are available on our \href{https://sites.gatech.edu/weightedehrhart/}{website}.

\subsection{Computational Complexity}
\label{sec:computational_complexity}

Any algorithm for computing parametric weighted Ehrhart polynomials is not expected to be polynomial with respect to its dimension and the degree of the weight, since the number of possible terms in its output grows exponentially.  However, for a fixed dimension $d$ and degree $m$, our algorithm has polynomial running time in the number of facets $n$ of $P_{A}(\mathbf{b})$. 

\begin{thm} Fix a dimension $d.$
For a smooth polytope $P=P_A(\mathbf{b}^0)\in \R^d$ with $n$ facets and a monomial weight $w$ of degree $m,$ our algorithm to compute $\sum\limits_{\mathbf{p}\in P_A(\mathbf b)\cap \Z^d}w(\mathbf{p})$  runs in $O\left( n^{d^2/4}\cdot 2^m\cdot  \binom{d+m}{d} \cdot (d+m)\right)$ time. 
\end{thm}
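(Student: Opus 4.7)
The plan is to bound the running time of each of the three steps of Algorithm 1 separately and combine them. Throughout I treat $d$ as fixed and charge unit cost to arithmetic over $\Q$. In Step 1 (Triangulation), smoothness implies that $P$ is simple, so by the Upper Bound Theorem $P$ has at most $O(n^{\lfloor d/2 \rfloor})$ vertices, and a placing or pulling triangulation yields $O(n^{d^2/4})$ maximal $d$-simplices (combining the vertex bound with an $O(v^{\lceil d/2 \rceil})$ bound on simplices in such a triangulation on $v$ vertices). Each vertex of $P_A(\mathbf{b}+\mathbf{h})$ is the simultaneous solution of its $d$ active facet equations $\mathbf{a}_i \cdot \mathbf{x} = b_i + h_i$, and smoothness ensures the corresponding $\mathbf{a}_i$ form a basis of $\Z^d$, so the vertex coordinates are linear expressions in $d$ of the parameters $b_i + h_i$, computable in $O(1)$ time for fixed $d$ by a single unimodular solve. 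The combinatorial triangulation is invariant under motion of the walls, so this step is performed once for the entire type cone.

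In Step 2 (Integration), for each maximal simplex $\Delta$, Theorem \ref{thm:how-to-integrate-a-polynomial-over-a-simplex} expresses $\int_\Delta w\, d\mathbf{y}$ as a weighted sum of $\binom{d+m}{m}\cdot 2^m$ terms. Each term is the evaluation of the degree-$m$ monomial $w$ at a signed sum of $m$ of the $d+1$ vertices of $\Delta$, and for fixed $d$ it reduces to expanding an $m$-fold product of linear forms in $O(d+m)$ field operations; the prefactor $\mathrm{vol}(\Delta)$ is a single $d \times d$ determinant, computable in $O(1)$ time per simplex. Multiplying by the $O(n^{d^2/4})$ simplices gives the integral $\int_{P_A(\mathbf{b}+\mathbf{h})}w\,d\mathbf{x}$ in $O(n^{d^2/4}\cdot 2^m \binom{d+m}{d}(d+m))$ total operations.

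In Step 3 (Todd operator), the integral produced by Step 2 has total degree at most $d+m$ in $\mathbf{h}$, a bound preserved by $\Todd_{\mathbf{h}}$. Since each simplex only contributes monomials in the $O(d)$ variables $h_i$ corresponding to the active facets of its vertices, applying $\Todd_{\mathbf{h}}$ coordinatewise and setting $\mathbf{h}=0$ amortizes to $O(d+m)$ per simplex and is subsumed by the Step 2 bound. The main obstacle is establishing the bound $O(n^{d^2/4})$ on the number of maximal simplices in the triangulation: this requires combining the Upper Bound Theorem for vertices of a simple $d$-polytope with $n$ facets with a sharp $O(v^{\lceil d/2 \rceil})$ bound on the simplex count of a pulling triangulation on $v$ vertices, yielding the composite exponent $\lfloor d/2 \rfloor \cdot \lceil d/2 \rceil = \lfloor d^2/4 \rfloor$. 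Verifying this triangulation bound, likely by recursion on $d$ via pulling at a fixed vertex, is the technical crux of the proof.
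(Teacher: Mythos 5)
Your proposal follows essentially the same route as the paper's proof: triangulate, bound the number of simplices by $O(n^{d^2/4})$ via McMullen's Upper Bound Theorem, charge $O(2^m\binom{d+m}{d})$ terms per simplex using the integration formula over a simplex, and treat the Todd operator as linear in the degree. The only differences are bookkeeping — you attach the $(d+m)$ factor to evaluating each term of the simplex integral, whereas the paper attaches it to applying the Todd operator per stored term, and you flag the simplex-count bound as needing verification where the paper disposes of it by citation — but the final bound and the structure of the argument coincide.
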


\begin{proof}
First, our algorithm triangulates $P.$  If $|V(P)|$ is the number of vertices in $P,$ then $P$ can be triangulated in $O(|V(P)|^{\lfloor d/2\rfloor})$ time \cite{Dewall_1998}, while $|V(P)|=O(n^{\lfloor d/2\rfloor})$ by McMullen's Upper Bound Theorem \cite{McMullen_upper_bound}, so triangulation takes $O(n^{d^2/4})$ time. The number of simplices in this triangulation is also  bounded by $O(n^{d^2/4}).$  For each simplex $\Delta$ we compute the integral $\int_\Delta w(x)dx$. By Theorem \ref{thm:how-to-integrate-a-polynomial-over-a-simplex}, each integral takes $O\left(2^m\binom{d+m}{d}\right)$ time. Finally, we add all these integral polynomials together, substitute $\mathbf{b}\mapsto\mathbf{b}+\mathbf{h}$, apply the Todd operator, and simplify the expression. 
By Theorem \ref{thm:how-to-integrate-a-polynomial-over-a-simplex}, the number of terms we store before simplifying the added terms is $O(n^{d^2/4} \cdot  2^m \cdot \binom{d+m}{d}).$  
Substitution and simplification are linear in the number of terms, while the Todd operator takes $O(d+m)$ time per term, since it is linear in the degree of the input. 

\end{proof}

Our algorithm significantly outperforms the method of polynomial interpolation under mild assumptions. To interpolate a degree $d+m$ polynomial in $n$ variables, we build an $N\times N$ matrix $M,$ where $N=\binom{n+d+m}{n}$ and  $M_{ij}$ is the evaluation of the $j^{\text{th}}$ monomial of degree $d+m$ in $n$ variables at the right hand side $\mathbf b^{(i)}$ for a randomly generated polytope $P_i=P_A(\mathbf{b}^{(i)})$ of the desired type.  Then we solve $M \mathbf{x}=\mathbf{c}$, where $c_i=\sum\limits_{\mathbf{p}\in P_i\cap \Z^d}w(\mathbf{p})$ and $\mathbf{x}$ gives the coefficients of our parametric formula.  

Each $c_i$ can be computed in $\text{poly}(n,m)$ time by a weighted version of Barvinok's algorithm~\cite{Barvinok_Pommersheim},  
giving interpolation a running time of  $T_2=\Omega(N^2+N\cdot \text{poly}(n,m)).$   Since $d$ is fixed, the running time of Algorithm \ref{alg:cap} is $T_1=O(2^m\cdot\text{poly}(n,m))$.  If $m=o(n)$ (which is often the case in practice), then  $N=\binom{n+d+m}{d+m}\sim \left(\frac{en}{m}\right)^m\gg 2^m,$ so $T_2\gg T_1$  
 and Algorithm \ref{alg:cap} is exponentially faster than interpolation.  It is not clear how to {\em efficiently} adapt Barvinok's algorithm to the parametric and weighted setting.

\section{Signs of coefficients of weighted $h^{*}$-polynomials}
\label{sec:hstar_positivity}

\subsection{Weighted $h^*$ positivity regions}

Which combinations of integral polytopes $P=P_A(\mathbf b)$ and homogeneous polynomial weights $w$ give weighted
$h^*$-polynomials with nonnegative coefficients? Alternatively, when does $h_{P, w}^*(z) \geq 0$ when $z \geq 0$? These are difficult problems for which we do not expect a simple answer.  In~\cite{Bajo_2024}  some sufficient conditions for positivity are given for a fixed polytope.  
Since the coefficients of weighted $h^*$-polynomials are piecewise polynomials, the $h^*$ positivity regions are described by polynomial inequalities in $\mathbf b$ and the coefficients of the weight polynomials.

\begin{lem}
For a fixed integral polytope $P$, the set of all weights giving rise to a weighted $h^*$-polynomial with nonnegative coefficients forms a convex polyhedral cone. 
\end{lem}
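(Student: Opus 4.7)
The plan is to exploit two nested linearities: both the weighted Ehrhart polynomial and the passage from an Ehrhart polynomial to its $h^*$-polynomial are linear in $w$. Consequently each coefficient of $h^*_{P,w}$ becomes a linear functional on the (finite-dimensional) space of weights, and ``all $h^*$-coefficients nonnegative'' cuts out a finite intersection of homogeneous half-spaces, which by definition is a convex polyhedral cone.

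First I would fix the ambient vector space so that the word ``polyhedral'' is literally meaningful. Fix a degree bound $m$ and let $W_m$ be the finite-dimensional real vector space of polynomials in $d$ variables of degree at most $m$, with basis the monomials $\mathbf{x}^\alpha$, $|\alpha|\le m$. For each integer dilation $t\ge 0$, evaluation
\[
w \longmapsto \ehr_{P,w}(t) \;=\; \sum_{\mathbf{p}\in tP\cap\Z^d} w(\mathbf{p})
\]
is visibly linear in $w\in W_m$. Since this expression is a polynomial in $t$ of degree at most $d+m$, I would recover its coefficients $c_0(w),\dots,c_{d+m}(w)$ by linear interpolation from $d+m+1$ sample values of $t$; each $c_j$ is then itself a linear functional on $W_m$.

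Next I would invoke the standard linear change of basis between the Ehrhart and $h^*$ encodings. Using $\sum_{t\ge 0} t^j z^t = A_j(z)/(1-z)^{j+1}$ with $A_j$ the $j$-th Eulerian polynomial and clearing denominators gives
\[
h^*_{P,w}(z) \;=\; \sum_{j=0}^{d+m} c_j(w)\, A_j(z)\, (1-z)^{d+m-j}.
\]
Extracting the coefficient of $z^i$ writes each $h^*_i(w)$ as a fixed $\R$-linear combination of $c_0(w),\dots,c_{d+m}(w)$, hence as a linear functional on $W_m$. The positivity locus is therefore
\[
\bigcap_{i=0}^{d+m} \{\, w\in W_m : h^*_i(w)\ge 0\,\},
\]
an intersection of $d+m+1$ closed linear half-spaces through the origin, i.e.\ a convex polyhedral cone.

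The main subtlety I anticipate is not analytic but expository. The lemma as phrased does not specify an ambient space, so one has to pick a finite-dimensional cross-section (weights of some fixed degree bound) for the statement to carry its standard meaning; without this choice the same argument still produces a convex cone defined by linear inequalities, but in an infinite-dimensional space. Beyond this choice, the proof requires nothing more than the two linearity observations above together with the finiteness of the number of $h^*$-coefficients, so no technical obstacle arises.
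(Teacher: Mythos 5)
Your proposal is correct and follows essentially the same route as the paper: the paper's proof simply observes that $w \mapsto h^*_{P,w}$ is linear and that the positivity locus is the preimage of the nonnegative orthant, hence a convex polyhedral cone. Your write-up just unpacks the linearity (via interpolation and the Eulerian-polynomial change of basis) and the choice of a finite-dimensional ambient space of weights, which the paper leaves implicit.
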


\begin{proof}
    For a fixed $P$, the map sending a weight polynomial $w$ to its corresponding weighted $h^*$-polynomial $h^*_{P,w}$ is a linear transformation.  The set of weights giving rise to nonnegative $h^*$-polynomials form the  preimage of the nonnegative orthant under this linear map, so it is a convex polyhedral cone.
\end{proof}

On the other hand, we can fix the weight polynomial and vary the polytopes in some parameter space.  Is the set of integral polytopes with positive weighted $h^*$-polynomial {\em convex}?  There are at least two natural ways to take convex combination of polytopes:  For lattice polytopes $P$ and $P'$, we can take the scaled Minkowski sums $\lambda P + (1-\lambda)P'$ for $0 \leq \lambda \leq 1$.  Alternatively, for two polytopes $P_A(\mathbf{b})$ and $P_A(\mathbf{b}')$, we can consider polytopes of the form $P(\lambda \mathbf{b} + (1-\lambda) \mathbf{b}')$ for $0 \leq \lambda \leq 1$. These two notions coincide when the two polytopes are in the same type cone, but they differ in general.  For alcoved polytopes, the latter notion is called the {\em alcoved hull} of the former. They coincide for $2$-dimensional alcoved polytopes, since there is only one maximal type cone. See~\cite{early2025} for more discussion.

\begin{ex}
\label{ex:nonconvex}
This example shows that for a fixed polynomial weight, the set of lattice polytopes with nonnegative weighted $h^*$-polynomials is not convex, even with a linear weight.

Let $w=-3x+2y.$  Let $A = \begin{pmatrix} 
1&1&-1&0&-1&0 \\ -1&0&1&1&0&-1\end{pmatrix}^T$
arising from the facets of two-dimensional alcoved polytopes.  Let $\mathbf{v}=(3,5,4,8,3,0),$ $\mathbf{h}=(-1,2,0,1,0,0),$ and for $i=0,1,2,3$, let $P_i=P_A(\mathbf{v}+i\mathbf{h})$.   See Figure~\ref{fig:nonconvex}. The integral polygons $P_1$ and $P_2$ are Minkowski convex combinations of $P_0$ and $P_3$, and equivalently correspond to convex combinations of the right hand side $\mathbf{b}$ vectors.  We computed that:
\begin{align*}
    h^*_{w,P_0}&= 10z^3 + 65z^2 + 25z &
  h^*_{w,P_1} & =-10z^3 - 37z^2 - 7z \\
    h^*_{w,P_2} &=-9z^3 - 39z^2 - 10z &
    h^*_{w,P_3} &=15z^3 + 67z^2 + 18z
\end{align*}
 The polytopes $P_0$ and $P_3$ have positive weighted $h^*$ coefficients, while $P_1$ and $P_2$ do not. \qed
\begin{figure}[h]
    \centering
    \includegraphics[width=0.4\linewidth]   {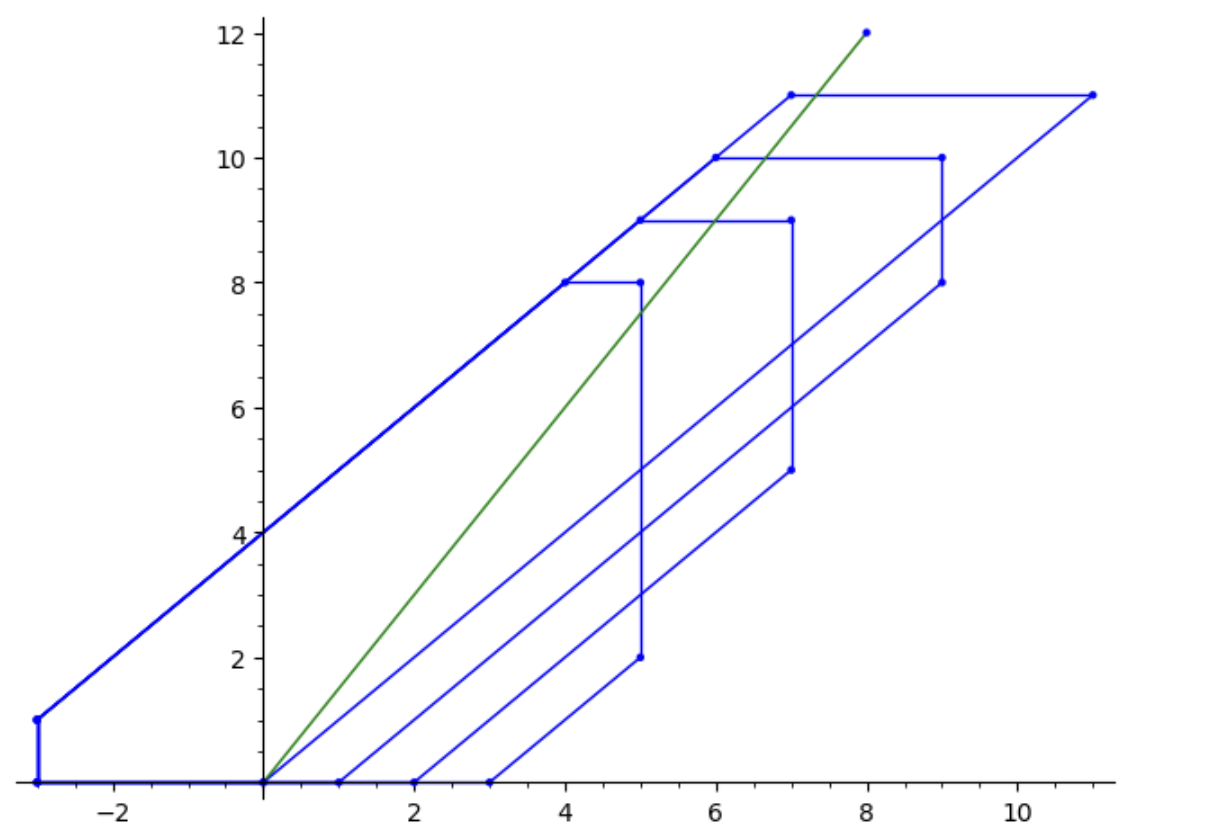} \caption{From left to right, the four blue polygons show $P_0,P_1,P_2,P_3$ from Example~\ref{ex:nonconvex}. The zero locus of the linear weight $w=2y-3x$ is the green line. 
    }
    \label{fig:nonconvex}
\end{figure}
\end{ex}

\subsection{Sign patterns and linear spaces of $h^{*}$- coefficients}

For a fixed full-dimensional lattice polytope $P \subset \R^d$ and a fixed integer $m \geq 0$, the map that sends a degree $m$ polynomial $w$ to the polynomials $h^*_{P,w}$ is a linear map from the vector space of degree-$m$ polynomials in $d$ variables to the vector space of degree $d+m$ polynomials in one variable. Let us denote the image by $L_{P,m}$.  If $m > 0$, the $h^*$-polynomials have constant term $0$. When $L_{P,m}$ is all of the $(d+m)$-dimensional space of univariate polynomials with constant term $0$, then trivially all sign patterns are possible for the weighted $h^*$ polynomial.
The degree-$m$ polynomials in $d$ variables form a vector space of dimension $\bigl(\!\binom{d}{m}\!\bigr) = \binom{d+m-1}{m}$, so for a generic polytope $P$, we expect
\[
\dim L_{P,m} = \min\left( \binom{d+m-1}{m}, d+m \right)
\]
The only cases when the expected dimension is less than $d+m$ are when $d=2$ or $m=1$.  
We verified computationally that every sign pattern is possible for weighted $h^*$-polynomials of alcoved polytopes for dimension 2, degrees 1 to 5, and dimension 3, degree 1; see our \href{https://sites.gatech.edu/weightedehrhart/}{website}.  Some sign patterns are more ubiquitous than others. The alternating sign pattern is particularly rare.
\begin{que}
   For a fixed dimension $d$ and degree $m$, is every sign pattern possible for weighted $h^*$-vectors as we vary the (alcoved) polytopes and the weights?  
\end{que}

\subsection{Weighted $h^*$-polynomials of large dilates of polytopes}
Let $f(t)$ be a degree $k$ polynomial over $\C$.  Then we have $\sum_{t\geq 0} f(t) z^t = \frac{h(z)}{(1-z)^{k+1}}$ for some polynomial $h(z)$ of degree $\leq k$.  For a positive integer $r$, let us define a polynomial $h^{<r>}(z)$ by
\[
\sum_{t \geq 0}f(rt)z^t = \frac{h^{<r>}(z)}{(1-z)^{d+1}}
\]
If $f(t)$ is the Ehrhart polynomial of a polytope $P$, then $h^{<r>}(z)$ is the $h^*$-polynomial of the dilation $rP$.
Brenti and Welker described the linear map $h(z) \mapsto h^{<r>}(z)$ and showed that the roots of $h^{<r>}(z)$ converges to the roots of the Eulerian polynomial~\cite{BrentiWelker}.  Let us give a derivation. Let $f(t) = \sum_{i=0}^k c_i t^i$ be a degree $k$ polynomial.  Then
\[
\sum_{t \geq 0}f(rt)z^t = \sum_{t \geq 0}  \sum_{i=0}^k c_i r^i t^i z^t = \sum_{i=0}^k c_i r^i \sum_{t\geq 0}t^i z^t = \sum_{i=0}^k c_i r^i \frac{A_i(z)}{(1-z)^{i+1}}
\]
where $A_i$ are Eulerian polynomials.  Thus
\[
h^{<r>}(z) = c_k r^k A_k(z) + c_{k-1} r^{k-1} (1-z)A_{k-1}(z) + \cdots + c_0 (1-z)^k.
\]
Let $\widetilde{h^{<r>}}(z)$ be the monic polynomial which is a constant multiple of $h^{<r>}(z)$, that is, $\widetilde{h^{<r>}}(z)=h^{<r>}(z)/(c_kr^k - c_{k-1}r^{k-1} + \cdots \pm c_0)$.  Clearly it has the same roots as $h^{<r>}(z)$.  If $c_k \neq 0$, then as $r\rightarrow \infty$, the polynomial $\widetilde{h^{<r>}}(z)$ converges to $A_k(z)$, so
the roots of $h^{<r>}(z)$ converge to the roots of $A_k(z)$.  Let us now apply this to weighted $h^*$-polynomials.

\begin{thm}
\label{thm:dilates}
Let $P$ be an integral $d$-polytope and $w$ be a homogeneous degree $m$ polynomial such that $h^*_{w,P}(1) \neq 0$. As integers $r\to \infty$ the roots of $h^*_{rP,w}(z)$ limit to the roots of the Eulerian polynomial $A_{d+m}(z)$.  It follows that for sufficiently large $r$, the coefficients of $h^*_{rP,w}(z)$ all have the same sign, with unimodal and log-concave absolute values.
\end{thm}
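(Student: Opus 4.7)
The plan is to apply the general identity derived in the paragraph immediately preceding the theorem to the weighted Ehrhart polynomial $\ehr_{P,w}(t)=\sum_{i=0}^{d+m} c_i\, t^i$. Since $\ehr_{rP,w}(t)=\ehr_{P,w}(rt)$, this specializes to
\[
h^*_{rP,w}(z) \;=\; \sum_{i=0}^{d+m} c_i\,r^i\,A_i(z)\,(1-z)^{d+m-i}.
\]
A first bookkeeping step is to note that the hypothesis $h^*_{w,P}(1)\neq 0$ is equivalent to $c_{d+m}\neq 0$, since every summand with $i<d+m$ contains the factor $(1-z)^{d+m-i}$ which vanishes at $z=1$; explicitly, $h^*_{w,P}(1)=c_{d+m}\,A_{d+m}(1)=c_{d+m}\,(d+m)!$.

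Next I would divide through by $r^{d+m}$. The polynomial $h^*_{rP,w}(z)/r^{d+m}$ converges coefficientwise (hence uniformly on compact subsets of $\C$) to $c_{d+m}\,A_{d+m}(z)$ as $r\to\infty$, because every other summand carries a factor $r^{-(d+m-i)}$ with $d+m-i>0$. Since the limit has nonzero leading coefficient, a standard continuity-of-roots argument (Hurwitz's theorem) gives that the $d+m$ roots of $h^*_{rP,w}(z)$ converge as a multiset to the roots of $A_{d+m}(z)$. This is the main assertion of the theorem.

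For the consequence on signs, log-concavity, and unimodality, I would invoke the classical theorem of Frobenius that the Eulerian polynomial $A_{d+m}(z)$ is real-rooted; in the convention used here, $A_{d+m}$ has a simple root at $z=0$ together with $d+m-1$ distinct negative real roots. Simple real roots of a real polynomial persist as real roots under sufficiently small real perturbations (by the implicit function theorem applied to $p$ at each simple root, using that complex roots come in conjugate pairs), so for $r$ large enough every root of $h^*_{rP,w}(z)$ is real. The homogeneity of $w$ for $m\geq 1$ forces $w(\mathbf 0)=0$, hence $c_0=\ehr_{P,w}(0)=0$ and therefore $h^*_{rP,w}(0)=0$; this pins one root exactly at the origin, and the remaining $d+m-1$ roots, being close to negative reals, are themselves negative. (The case $m=0$ is Stanley's classical nonnegativity theorem.) Factoring
\[
h^*_{rP,w}(z) \;=\; c_{d+m}\,r^{d+m}\,z\prod_{i=1}^{d+m-1}\bigl(z+|\rho_i^{(r)}|\bigr)
\]
with each $|\rho_i^{(r)}|>0$ shows that all coefficients share the sign of $c_{d+m}$, and Newton's inequalities for real-rooted polynomials then give log-concavity of the absolute values, which in turn implies unimodality.

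The main technical point to be careful about is the root of $h^*_{rP,w}(z)$ closest to the origin: a priori the perturbation could push it onto the positive real axis, which would break the sign conclusion. The key observation that resolves this cleanly is the homogeneity of $w$, which forces the constant term of $h^*_{rP,w}(z)$ to vanish whenever $m\geq 1$, locking that root exactly at $z=0$ for every $r$.
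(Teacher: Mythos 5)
Your proposal is correct and follows essentially the same route as the paper: invoke the identity $h^{<r>}(z)=\sum_i c_i r^i A_i(z)(1-z)^{k-i}$ from the preceding paragraph, normalize by $r^{d+m}$ to get convergence of roots to those of $A_{d+m}$, and then use real-rootedness of the Eulerian polynomial together with the conjugate-pair/simple-root perturbation argument to deduce real-rootedness, common sign, and (via Newton's inequalities) log-concavity and unimodality for large $r$. Your extra care about the root at the origin --- observing that homogeneity of $w$ forces $h^*_{rP,w}(0)=0$ for $m\geq 1$, so that root cannot drift onto the positive axis --- is a genuine refinement of a point the paper's proof passes over when it asserts the limit polynomial has only "negative simple roots."
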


\begin{proof} Since $h^*_{w,P}(1) \neq 0$, the weighted Ehrhart polynomial has degree $d+m$.
As explained in the paragraph above the theorem, the roots of $h^*_{rP,w}(z)$ converge to the roots of $A_{d+m}(z)$.
Since the complex roots of $h^*_{rP,w}(t)$ come in conjugate pairs, and the Eulerian polynomial is real rooted with negative simple roots, we cannot have pairs of non-real roots converging to the same real root at the limit. Thus for sufficiently large $r$ the polynomial $h^*_{w,rP}$ is also real rooted with simple negative real roots.  In particular, its coefficients are either all-positive or all-negative, depending on the sign of the leading coefficient of the weighted Ehrhart polynomial.  Consequently, the absolute values of the coefficients are unimodal and log-concave for all sufficiently large integers $r$.
\end{proof}

\begin{que}
Fix a homogeneous weight polynomial $w$ and dimension $d$. Is there a universal bound $N_{w,d}$ such that for every lattice polytope $P$ in $\R^d$, the polynomial $h^*_{rP,w}$  has simple negative real roots for all $r\geq N_{w,d}$?  or has coefficients which have the same sign? or are unimodal? or log-concave?
\end{que}
For unweighted Ehrhart $h^*$-polynomials, affirmative answers have been given in~\cite{BeckStapledon}, where it is conjectured that $d$ dilates suffice for the $h^*$ polynomial to have simple negative roots.
In the weighted case, there exist examples where a large dilate is required.  See the \href{https://sites.gatech.edu/weightedehrhart/}{website} for an example of a $2$-dimensional alcoved polytope and a  degree 1 weight for which at least 63 dilates are needed for the weighted $h^*$ to have coefficients of the same sign and to have simple negative roots.  


\section{Acknowledgements}

The first two authors acknowledge support from the U.S. Department of Education Graduate Assistance in Areas of National Need (GAANN) program at the Georgia Institute of Technology (Award \#P200A240169).  The last author acknowledges support from National Science Foundation (Award \#2348701). We thank Ben Braun for pointing us to the phenomenon in Theorem \ref{thm:dilates} and the reference~\cite{BeckStapledon}.

\bibliographystyle{amsalpha}
\bibliography{refs}

\end{document}